\title{Automorphisms of normal quasi-circular domains}
\author{Atsushi Yamamori}
\address{Department of Mathematics, Pohang University of Science and Technology, Pohang 790-784, The Republic of Korea}
\thanks{The research of the authors was supported by SRC-GaiA (The Center for Geometry and itsApplications), the Grant 2011-0030044 from The Ministry of Education, The Republic of Korea}
\email{yamamori@postech.ac.kr, ats.yamamori@gmail.com}
\subjclass[2000]{Primary 32M05; Secondary 32A25}
\keywords{Isotropy subgroup, Automorphism group, Bergman kernel, Representative domain}
\theoremstyle{definition}
\newtheorem{theorem}{Theorem}[section]
\newtheorem{proposition}[theorem]{Proposition}
\newtheorem{remark}{Remark}
\newtheorem{problem}{Problem}
\newtheorem{example}{Example}[section]
\newtheorem{definition}{Definition}[section]
\begin{document}
\begin{abstract}
It was shown by Kaup that every origin-preserving automorphism of quasi-circular domains is a polynomial mapping.
In this paper, we study how the weight of quasi-circular domains and the degree of such automorphisms are related.
By using the Bergman mapping, we prove that every origin-preserving automorphism of  normal quasi-circular domains in $\mathbb C^2$ is linear.
\end{abstract}
\maketitle
\section{Introduction}
A domain $D$ is called a {\it quasi-circular} domain (or {\it $(m_1,\ldots, m_n)$-circular}) if it is invariant under the following mapping:
\[
D \ni (z_1,\ldots,z_n) \mapsto (e^{i m_1\theta}z_1,\ldots, e^{i m_n\theta} z_n) \in D, \mbox{ for some $m_1,\ldots,m_n\in \mathbb Z_{+}$}.
\]
The $n$-tuple $(m_1,\ldots, m_n)$ is called the {\it weight} of the quasi-circular domain $D$.
In particular, if $m_1=\cdots=m_n$, it is called circular.
For instance, the followings are examples of quasi-circular domains:
\begin{align*}
{\mathbb G}_2&=\{(z_1+z_2,z_1z_2)\colon |z_1|,|z_2|<1\},\\
\mathbb E
&=\{z\in\mathbb C^3; |z_1-\overline{z_2}z_3|+ |z_2-\overline{z_1}z_3| + |z_3|^2<1\}.
\end{align*}
These quasi-circular domains have been studied from various aspects (see, \cite{Agler2001,Agler2004,Edi,Jarnicki,Kos,Pflug,Young} and references therein).
The symmetrized bidisk $\mathbb G_2$ is known as an example of  a bounded pseudoconvex domain such that 
the Carath\'{e}odory and Kobayashi distances coincide, but it cannot be exhausted by domains biholomorphic to convex domains \cite{Costara2004,Edi2004}.
The domain $\mathbb G_2$ also appeared in \cite{Costara2005} in connection with the $2 \times 2$ spectral Nevanlinna-Pick problem.
Thus quasi-circular domains have received much interest in recent decades.

A famous theorem due to Cartan asserts that every origin-preserving automorphism of a circular domain
is linear.
By using Cartan's theorem, the automorphism groups of various circular domains
have been extensively investigated. 
It is also known that every origin-preserving automorphism of a quasi-circular domain
is a polynomial mapping \cite[Lemma 1]{Kaup}.
For quasi-circular cases, the following problem arises naturally:
\begin{problem}\label{problem}
Let $D$ be a quasi-circular domain and
suppose that $f$ is an origin-preserving automorphism of $D$.
Describe how the weight $(m_1,\ldots, m_n)$ of $D$ and the degree of the polynomial $f$ are related.
\end{problem}
In this paper, we focus our attention to one of the most interesting case \mbox{deg $f=1$}.
Our main result tells us that every origin-preserving automorphism of a normal quasi-circular domain in $\mathbb C^2$ is a linear mapping:
\begin{theorem}
Let $D, D'\subset \mathbb C^2$ be normal quasi-circular domains. 
Then a biholomorphism $f:D\rightarrow D'$ which preserves the origin is linear.
In particular, an automorphism $f \in \mbox{Aut}(D)$ which preserves the origin is linear.
\end{theorem}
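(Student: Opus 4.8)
The plan is to run the argument through the biholomorphic transformation law of the \emph{Bergman mapping} (Bergman representative coordinates) and then to pin down the special shape that this mapping takes on a quasi-circular domain in $\mathbb C^2$. Writing $K_D(z,\bar w)$ for the Bergman kernel and assuming $0\in D$, I would set
\[
\phi^D_k(z)=\frac{\partial}{\partial\bar w_k}\log K_D(z,\bar w)\Big|_{w=0},\qquad k=1,2,
\]
which are holomorphic in $z$, and let $\sigma_D$ be the associated Bergman mapping, i.e. $(\phi^D_1,\phi^D_2)$ normalized by the inverse Bergman metric at $0$ so that $\sigma_D'(0)=\mathrm{id}$. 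The first observation to record is that on a quasi-circular domain the monomials $z_1^{\alpha_1}z_2^{\alpha_2}$ of distinct weighted degree $m_1\alpha_1+m_2\alpha_2$ are mutually orthogonal in the Bergman space: averaging the inner product over the $S^1$-action kills the cross terms. In particular $K_D(z,0)$ is constant, whence $\phi^D_k(0)=0$.

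Next I would establish the intertwining relation. Differentiating the transformation rule $K_D(z,\bar w)=\det f'(z)\,K_{D'}(f(z),\overline{f(w)})\,\overline{\det f'(w)}$ logarithmically in $\bar w_k$ and setting $w=0$ gives $\phi^D_k=\sum_l\overline{\partial f_l/\partial w_k(0)}\,(\phi^{D'}_l\circ f)+\bar c_k$, where the constant $\bar c_k$ comes solely from $\det f'$. Evaluating at $z=0$ and invoking $\phi^D_k(0)=\phi^{D'}_l(0)=0$ forces $c_k=0$: \emph{the Jacobian term drops precisely because of quasi-circularity}. Passing to the normalized maps then yields the key relation
\[
\sigma_{D'}\circ f=f'(0)\cdot\sigma_D .
\]
Thus $f$ becomes linear in representative coordinates, with linear part $f'(0)$.

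It remains to compute $\sigma_D$. Since $\phi^D_k$ involves only monomials $z^\alpha$ with $m_1\alpha_1+m_2\alpha_2=m_k$, I would solve this Diophantine condition in two nonnegative variables. After reducing to $\gcd(m_1,m_2)=1$ (the $S^1$-action factors through the reduced weight) and ordering $m_1\le m_2$, the only weight-$m_1$ monomial is $z_1$, and the only weight-$m_2$ monomials are $z_2$ and, exactly when $m_1=1$, also $z_1^{m_2}$. Hence $\sigma_D$ is the identity in \emph{every} case except weight $(1,m)$, where $\sigma_D(z)=(z_1,\,z_2+c_D z_1^{m})$ with $c_D$ read off from the single off-diagonal Bergman coefficient attached to $\langle z_1^{m},z_2\rangle$. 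This is where $\dim=2$ is decisive: the weighted-degree equation has almost no solutions, so no other nonlinear terms can appear. Normality is precisely the condition that this residual coefficient vanishes, i.e. that $D$ is in Bergman normal form with $\sigma_D=\mathrm{id}$; for every weight other than $(1,m)$ it holds automatically. With $\sigma_D=\mathrm{id}$ and $\sigma_{D'}=\mathrm{id}$ the intertwining relation collapses to $f(z)=f'(0)z$, proving $f$ linear.

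The main obstacle is the third step. Because the quasi-circular symmetry is only an $S^1$-action rather than the full torus, monomials of the \emph{same} weighted degree need not be orthogonal, so $(\phi^D_1,\phi^D_2)$ is controlled by a genuine non-diagonal Gram matrix rather than by a Reinhardt-type diagonal one; the work is to show that in $\mathbb C^2$ this can produce at most the single term $z_1^{m}$ and to tie the vanishing of its coefficient to normality. Establishing the transformation law rigorously—namely the cancellation of the $\det f'$ contribution—is the other delicate point, but it follows cleanly once $\phi^D_k(0)=0$ is in hand.
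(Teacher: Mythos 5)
Your proof is correct and is essentially the paper's own argument: the constancy of $K_D(z,0)$, the weighted-homogeneity constraint $m_1\alpha_1+m_2\alpha_2=m_k$ on the representative coordinates (whose only solutions under $\gcd(m_1,m_2)=1$ and $2\le m_1<m_2$ are $z_1$ and $z_2$), and the intertwining law of the Bergman mapping correspond precisely to the paper's Propositions \ref{minimal}, \ref{repre} and \ref{comm}. Your direct derivation of $\sigma_{D'}\circ f=f'(0)\,\sigma_D$ (rather than citing the Ishi--Kai diagram) and your normalization $\sigma_D'(0)=\mathrm{id}$ in place of $T_D(0,0)^{-1/2}$ are cosmetic variants, and the Diophantine crux---ruling out the residual monomial $z_1^{m_2}$ because $m_1\ge 2$ would force $m_1\mid m_2$---is identical to the paper's exclusion of $(k_1+1)m_1=m_2$.
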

This theorem is an answer of Problem \ref{problem} for $\deg f =1$ and $n=2$.

For circular domains, there is a commutative relation between $\rho_\theta(z)=e^{i \theta} z$ and every origin-preserving automorphism $\varphi$.
Namely we have $\varphi \circ \rho_\theta = \rho_\theta \circ \varphi$. For the proof of Cartan's theorem, this relation is substantial.
However we do not pursue this idea here (see Remark \ref{rem}).
Instead, we pursue an idea given by Ishi and Kai \cite{Ishi}.
The proof of our main theorem uses the Bergman mapping (also known as the Bergman coordinate).
This mapping was introduced by Stefan Bergman.
After its discovery, the Bergman mapping appeared in many studies (cf. \cite{G-K0,G-K,Ishi,Lu,Song,Tsuboi})
and it played a substantial role in their studies.
We will see that the Bergman mapping also plays an essential role in our study.

The organization of this paper is as follows.
Subsequent to this introduction, Section \ref{sec2} provides some basic properties of minimal and representative domains.
In Section \ref{sec3}, we introduce the normal quasi-circular domain and the Bergman mapping. We will see that the Bergman mapping is a linear mapping if a domain is normal quasi-circular.
This fact is used to prove our main result.
\section{Preliminaries}\label{sec2}
In this section, we introduce the minimal and representative domain and collect some basic facts.
Let us begin this section with some basic properties of the Bergman kernel.
\subsection{Basics of Bergman kernels}
Let $D$ be a domain in $\mathbb C^n$. Define the Bergman space $A^2(D)$ by
\begin{align*}
A^2(D)=\left\lbrace f\in \mathcal O(D); \int_D |f(z)|^2 dV(z) < \infty \right\rbrace.
\end{align*}
The Bergman space is a Hilbert space with the inner product
$$ \langle f,g \rangle = \int_{D} f(z)\overline{g(z)} dV(z) .$$
The reproducing kernel of the Bergman space is called the Bergman kernel and denote it by $K_D$.
Namely, the Bergman kernel is the unique function satisfying the following property:
\begin{align}
f(z)=\int_D f(w) K_D(z,w)dV(w), \quad \mbox{ for all } f\in A^2(D).
\end{align}
The Bergman kernel is also obtained by a complete orthonormal basis $\{ e_k \}_{k\in \mathbb N}$ of the Bergman space:
\begin{align}\label{CONS}
 K_D(z,w)=\sum_{k\in \mathbb N}e_k(z)\overline{e_k(w)}.
\end{align}
An important property of the Bergman kernel is relative invariance under the biholomorphisms.
Let $\varphi:D\rightarrow D'$ be a biholomorphism. Then the Bergman kernels $K_D$ and $K_{D'}$ satisfy the following relation:
\begin{align}\label{trans}
K_D(z,w)=\overline{\det J(\varphi, w)} K_{D'}(\varphi(z) ,\varphi (w)) \det J(\varphi, z).
\end{align}
Here $J(\varphi,z)$ is the Jacobian matrix of $\varphi = {}^t (\varphi_1, \ldots , \varphi_n)$ at $z$:
$$J(\varphi,z):=
\begin{pmatrix} 
\dfrac{\partial \varphi_1}{\partial z_1}(z)& \cdots & \dfrac{\partial \varphi_1}{\partial z_n}(z)
\\ \vdots & \ddots & \vdots\\
\dfrac{\partial \varphi_n}{\partial z_1}(z) & \cdots &\dfrac{\partial \varphi_n }{\partial z_n}(z) 
\end{pmatrix}.$$
The unit disk is an example whose Bergman kernel has an explicit form.
\begin{example}\label{disk}
For the unit disk $\mathbb D=\{|z|<1\} \subset \mathbb C$, 
the set $\{(\frac{k+1}{\pi} )^{\frac{1}{2}}z^k  \}_{k\in\mathbb Z_{+}}$ forms a complete orthonormal basis of the Bergman space
$A^2(\mathbb D)$. Using \eqref{CONS}, we compute the Bergman kernel:
\begin{align*}
K_{\mathbb D}(z,w)&=\dfrac{1}{\pi}\sum_{k=0}^\infty  (k+1)(z\overline{w})^k =\dfrac{1}{\pi(1-z\overline{w})^2 }.
\end{align*}
\end{example}
Further information about the Bergman kernel can be found in \cite{Greene,Krantz}.
\subsection{Minimal domains and Representative domains}
Recall that a bounded domain $D$ in $\mathbb C^n$ is called a minimal domain with a center at $z_0\in D$
if $\mbox{Vol}(D') \geq \mbox{Vol}(D)$ for any biholomorphism $\varphi: D\rightarrow D'$ such that $J(\varphi, z_0)=1$.
It is known that 
the minimality is equivalent to the following condition on the Bergman kernel \cite{Maschler}:
\begin{theorem}
A bounded domain $D$ is a minimal domain with the center at $z_0$, if and only if
\begin{align}
K_D(z,z_0)\equiv c, \quad \mbox{for any $z\in D$,}
\end{align}
where $c$ is a non-zero constant.
\end{theorem}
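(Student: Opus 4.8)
The plan is to prove both implications through the extremal (Cauchy--Schwarz) characterization of the Bergman kernel. First I would rewrite the volume of any competitor as a Bergman-space norm. If $\varphi\colon D\to D'$ is a biholomorphism, its real Jacobian is $|\det J(\varphi,z)|^2$, so the change-of-variables formula gives
\[
\mathrm{Vol}(D')=\int_{D}|\det J(\varphi,z)|^2\,dV(z)=\bigl\|\det J(\varphi,\cdot)\bigr\|_{A^2(D)}^2 .
\]
The normalization $J(\varphi,z_0)=1$ makes $g:=\det J(\varphi,\cdot)$ a holomorphic function with $g(z_0)=1$, and taking $\varphi=\mathrm{id}$ shows $\mathrm{Vol}(D)=\|1\|^2$. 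Thus $D$ is minimal at $z_0$ precisely when $g\equiv 1$ minimizes $\|g\|^2$ among all Jacobian determinants arising this way. By the reproducing property $g(z_0)=\langle g,K_D(\cdot,z_0)\rangle$, so Cauchy--Schwarz yields
\[
1=|g(z_0)|^2\le \|g\|^2\,K_D(z_0,z_0),
\]
with equality if and only if $g$ is a scalar multiple of $K_D(\cdot,z_0)$; under $g(z_0)=1$ this forces $g=g^\ast:=K_D(\cdot,z_0)/K_D(z_0,z_0)$. Hence $g^\ast$ is the \emph{unique} minimizer of $\|g\|^2$ over all of $A^2(D)$ subject to $g(z_0)=1$, with minimal value $1/K_D(z_0,z_0)$.

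The ``if'' direction is then immediate. If $K_D(\cdot,z_0)\equiv c$, then $c=K_D(z_0,z_0)>0$, so $g^\ast\equiv 1$, and applying the reproducing property to the constant function $1\in A^2(D)$ (here $D$ is bounded) gives $\mathrm{Vol}(D)=1/c=1/K_D(z_0,z_0)$. Therefore every competitor satisfies $\mathrm{Vol}(D')=\|g\|^2\ge 1/K_D(z_0,z_0)=\mathrm{Vol}(D)$, so $D$ is minimal with center $z_0$.

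For the converse I would argue variationally. Given a holomorphic vector field $V$ on $D$ with bounded first derivatives and $J(V,z_0)=0$, the maps $\varphi_t=\mathrm{id}+tV$ satisfy $J(\varphi_t,z_0)=1$ exactly (since $J(\varphi_t,\cdot)=I+t\,J(V,\cdot)$), are injective for small $|t|$, and obey $\det J(\varphi_t,\cdot)=1+t\,\mathrm{div}\,V+O(t^2)$. Minimality forces the first variation of $\|\det J(\varphi_t,\cdot)\|^2$ to vanish, giving $\mathrm{Re}\,\langle \mathrm{div}\,V,1\rangle=0$; replacing $V$ by $iV$ upgrades this to $\langle \mathrm{div}\,V,1\rangle=0$. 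As $V$ ranges over such fields, $\mathrm{div}\,V$ sweeps a dense subset of $\{f\in A^2(D):f(z_0)=0\}=\{K_D(\cdot,z_0)\}^{\perp}$; hence $1\perp\{K_D(\cdot,z_0)\}^{\perp}$, so $1\in\mathrm{span}\{K_D(\cdot,z_0)\}$, i.e. $K_D(\cdot,z_0)$ is constant.

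I expect the main obstacle to be exactly this last step: one must simultaneously ensure that $\varphi_t$ is \emph{globally} injective on the merely bounded (possibly non-convex) domain $D$ and that the admissible fields make $\{\mathrm{div}\,V\}$ dense in $\{K_D(\cdot,z_0)\}^{\perp}$. I would handle this by restricting to bounded-derivative fields, e.g. $V=(\psi,0,\dots,0)$ with $\psi=\int^{z_1}\phi\,dt$ so that $\mathrm{div}\,V=\phi$ realizes any prescribed $\phi$ with $\phi(z_0)=0$ and $d\psi(z_0)=0$, together with a separate approximation argument for injectivity and density. Alternatively, one constructs a single biholomorphism realizing $g^\ast$ as its Jacobian determinant; then minimality against the extremal property forces $\|g^\ast\|^2=\|1\|^2$, and uniqueness of the minimizer gives $g^\ast\equiv1$, which is again equivalent to $K_D(\cdot,z_0)$ being constant.
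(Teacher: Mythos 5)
The paper itself gives no proof of this statement: it is quoted from Maschler's paper \cite{Maschler}, with only the normalization $c=1/\mathrm{Vol}(D)$ verified afterwards. So your attempt has to be judged on its own merits. Your ``if'' half is correct and complete: writing $\mathrm{Vol}(\varphi(D))=\|\det J(\varphi,\cdot)\|_{A^2(D)}^2$, identifying $g^\ast=K_D(\cdot,z_0)/K_D(z_0,z_0)$ via Cauchy--Schwarz as the unique minimizer of $\|g\|^2$ subject to $g(z_0)=1$, and checking $\mathrm{Vol}(D)=1/K_D(z_0,z_0)$ when the kernel is constant is exactly the classical extremal argument, and it is watertight.

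The genuine gap is in the converse, and it is worse than the technical obstacles you flag: the density claim at its core is \emph{false} in general. Take $D=\{r<|z|<1\}\subset\mathbb{C}$. In one variable $\mathrm{div}\,V=V'$, so every realizable divergence is the derivative of a function holomorphic on $D$ and hence has vanishing residue, $\oint_{|z|=\rho}\mathrm{div}\,V\,dz=0$. The residue is a continuous linear functional on $A^2(D)$ (by orthogonality of the Laurent monomials it equals $\langle\,\cdot\,,z^{-1}\rangle/\|z^{-1}\|^2$), so $\{\mathrm{div}\,V\}$ lies in the proper closed subspace $\{z^{-1}\}^{\perp}$ and, with your normalization $J(V,z_0)=0$, in the codimension-two subspace $\{z^{-1},K_D(\cdot,z_0)\}^{\perp}$. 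It can therefore never be dense in the codimension-one subspace $\{K_D(\cdot,z_0)\}^{\perp}$: density would force $z^{-1}\in\mathrm{span}\{K_D(\cdot,z_0)\}$, i.e. $K_D(z,z_0)=c\,z^{-1}$, which contradicts the reproducing property applied to $f\equiv 1$ since $\int_D z^{-1}dV=0$. So the step ``$1\perp\{\mathrm{div}\,V\}$ implies $1\perp\{K_D(\cdot,z_0)\}^{\perp}$'' collapses. Your fallback is also a dead end: $g^{\ast}$ can have zeros --- the paper's own remark lists bounded domains with vanishing Bergman kernels, including precisely these annuli for $r<e^{-2}$ --- whereas the Jacobian determinant of a biholomorphism never vanishes; by Hurwitz's theorem $g^{\ast}$ is then not even an $A^2$-limit of such Jacobians, so no competitor or minimizing sequence can realize it. What the variational scheme actually needs is weaker (by contraposition, a \emph{single} admissible field with $\langle\mathrm{div}\,V,1\rangle\neq 0$ whenever the kernel is non-constant), but producing such a field on an arbitrary bounded, possibly non-convex and non-polynomially-convex domain is exactly the hard content of Maschler's theorem, and your proposal does not supply it. (The injectivity worry, by contrast, is repairable: for polynomial $V$ the map $\mathrm{id}+tV$ is injective for small $t$ since $V$ is Lipschitz on the convex hull of $D$ --- but polynomial divergences are even further from dense, as the annulus again shows.)
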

By the reproducing property of the Bergman kernel, we see that
\begin{align*}
1=\int_D 1 \cdot K(z,z_0)  dV(z)= c\int_D dV(z).
\end{align*}
Thus the constant $c$ must be equal to $1/\mbox{Vol}(D)$.

By Example \ref{disk},
we know that $K_{\mathbb D}(z,0)=1/\pi=1/\mbox{Vol} (\mathbb D) $.
Thus the unit disk is a minimal domain with the center at the origin.

For $z,w\in D$ such that $K_D(z,w)\not=0$,
we define an $n \times n$ matrix $T_D$ by
$$T_D (z,w):=
\begin{pmatrix} 
\dfrac{\partial^2 }{\partial \overline{w_1}\partial z_1}\log K_D(z,w) & \cdots & \dfrac{\partial^2 }{\partial \overline{w_1}\partial z_n}\log K_D(z,w)
\\ \vdots & \ddots & \vdots\\
\dfrac{\partial^2 }{\partial \overline{w_n}\partial z_1}\log K_D(z,w) & \cdots & \dfrac{\partial^2 }{\partial\overline{ w_n}\partial z_n}\log K_D(z,w)
\end{pmatrix}.$$
The matrix $T_D(z,z)$ is a positive definite hermitian matrix for all $z\in D$.
The matrix $T_D$ possesses the following transformation formula under the biholomorphisms:
\begin{align}\label{eq:T}
T_D(z,w)=\overline{ {}^t J(\varphi, w)} T_{D'}(\varphi(z) ,\varphi (w))  J(\varphi, z), \quad \mbox{if $K_D(z,w)\not=0$} .
\end{align}
By following a paper by Q.-K.~Lu \cite{Lu}, we introduce the representative domain.
\begin{definition}
A bounded domain $D$ in $\mathbb C^n$  is called a representative domain (in the sense of Q.-K.~Lu)
if there exists a point $z_0\in D$ such that $T_D(z,z_0)$ 
is a constant matrix for all $z\in D$. The point $z_0$ is called the center of the representative domain.
\end{definition}
We already know that the unit disk is minimal with the center at the origin.
Moreover it is also representative with the same center.
Indeed, by a simple computation, we have $T_{\mathbb D}(z,w)=2/(1-z\overline{w})^2$ and also $T_{\mathbb D}(z,0)=2$.

We finish this section with a remark on zeros of the Bergman kernel $K_D$.
\begin{remark}
The matrix $T_{D}(z,w)$ is not well-defined for $z,w\in D$ such that $K_D(z,w)=0$.
If a domain $D$ is a homogeneous bounded domain then it is known that $K_D(z,w)\not =0$ for all $z,w\in D$ (see \cite[Proposition 3.1]{Ishi}).
However there are non-homogeneous examples whose Bergman kernels are not zero-free.
For instance, the followings are such examples:
\begin{enumerate}
\item[(i)] $\{z\in \mathbb C; r<|z|<1\}$ for $r<e^{-2}$ (see \cite{S}),
\item[(ii)] $\{z\in\mathbb C^n; |z_1|+ \cdots + |z_n|<1 \}$ for $3\geq n$ (see \cite{Boas}),
\item[(iii)] $\left\lbrace (z_1,z_2)\in\mathbb C^2; |z_2|< \dfrac{1}{1+|z_1|}  \right\rbrace $ (see \cite{Boas2}).
\end{enumerate}
For further information about zeros of the Bergman kernel, see \cite{Ahn,Jarnicki2,Krantz2,Y,Y2} and references therein.
\end{remark}
\section{Bergman mapping for quasi-circular domains}\label{sec3}
The purpose of this section is to study the Bergman mapping for quasi-circular domains and prove our main theorems (Theorems \ref{main} and \ref{main0}).
We begin our study with some definitions.
In the following we only consider bounded domains which contain the origin.
\begin{definition}\label{def:quasi}
Let $ m_1,\ldots,m_n \in \mathbb Z_{+}$.
A bounded domain $D$ is called quasi-circular (or $(m_1,\ldots,m_n)$-circular)
if $(e^{i m_1\theta}z_1,\ldots, e^{i m_n\theta} z_n)\in D$ for any $\theta \in \mathbb R$ and
$(z_1,\ldots,z_n)\in D$.
The $n$-tuple $(m_1,\ldots, m_n)$ is called the weight of a quasi-circular domain.
\end{definition}
If $m_1=\cdots = m_n$, then it is an usual circular domain.
Now we define the normal quasi-circular domain.
\begin{definition}
Let $D\subset \mathbb C^2$ be a quasi-circular domain and $(m_1, m_2)$ its weight.
Without loss of generality we may assume that $m_1 \leq m_2 $ and $\mathrm{gcd}(m_1, m_2)=1$.
A quasi-circular domain $D$ is called normal if $m_1\geq2$.
\end{definition}
Let us give some concrete examples.
Consider the following two domains
\begin{align*}
D_1&=\{(z_1, z_2)\in \mathbb B^2; | z_1^3+z_2^2 |<1\},\\
D_2&=\{(z_1, z_2)\in \mathbb B^2; | z_1^2+z_2 |<1\}.
\end{align*}
Then $D_1$ is a $(2,3)$-circular domain and also normal.
On the other hand, $D_2$ is not normal, since $D_2$ is a $(1,2)$-circular domain.
It is easy to see that a $(p_1,p_2)$-circular domain is normal for any prime numbers $p_1, p_2$ such that $p_1 < p_2$.

For the first step of our study,
we  prove the minimality of quasi-circular domains.
\begin{proposition}\label{minimal}
If a domain $D\subset \mathbb C^2$ is quasi-circular, then it is a minimal domain with the center at the origin.
\end{proposition}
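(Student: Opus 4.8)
The plan is to use Maschler's criterion recalled in Section~\ref{sec2}: it suffices to show that the Bergman kernel satisfies $K_D(z,0)\equiv c$ for all $z\in D$, where $c$ is a nonzero constant. The whole argument rests on a single structural observation. For each fixed $\theta$, the defining rotation $\rho_\theta(z_1,z_2)=(e^{im_1\theta}z_1,e^{im_2\theta}z_2)$ is an origin-preserving automorphism of $D$ (by quasi-circularity), and because it is linear its Jacobian matrix $J(\rho_\theta,z)=\mathrm{diag}(e^{im_1\theta},e^{im_2\theta})$ is constant in $z$, with determinant $\det J(\rho_\theta,z)=e^{i(m_1+m_2)\theta}$.

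First I would apply the relative-invariance formula \eqref{trans} to $\varphi=\rho_\theta$ (taking $D'=D$) and specialize to $w=0$. Since $\rho_\theta(0)=0$ and the Jacobian determinant is the constant $e^{i(m_1+m_2)\theta}$, the two scalar factors $\overline{\det J(\rho_\theta,0)}=e^{-i(m_1+m_2)\theta}$ and $\det J(\rho_\theta,z)=e^{i(m_1+m_2)\theta}$ multiply to $1$, and \eqref{trans} collapses to $K_D(z,0)=K_D(\rho_\theta(z),0)$ for every $\theta\in\mathbb R$ and every $z\in D$. Thus the holomorphic function $h(z):=K_D(z,0)$ is invariant under the entire one-parameter group $\{\rho_\theta\}$.

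Next I would expand $h$ in its Taylor series $h(z)=\sum_\alpha c_\alpha z^\alpha$ about the origin and compare it with $h(\rho_\theta z)=\sum_\alpha c_\alpha e^{i(m_1\alpha_1+m_2\alpha_2)\theta}z^\alpha$. The invariance from the previous step forces $c_\alpha=0$ whenever $m_1\alpha_1+m_2\alpha_2\neq 0$. Because $m_1,m_2\in\mathbb Z_{+}$ and $\alpha_1,\alpha_2\geq 0$, the weighted degree $m_1\alpha_1+m_2\alpha_2$ vanishes only for $\alpha=(0,0)$; hence $h$ reduces to its constant term and $K_D(z,0)\equiv c$. Maschler's theorem then yields minimality with center at the origin, and the reproducing property evaluated at $f\equiv 1$ identifies $c=1/\mathrm{Vol}(D)$, exactly as in the computation following that theorem.

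The main point, and the only place where quasi-circularity enters decisively, is the cancellation of the two Jacobian factors in \eqref{trans}, which hinges on $\rho_\theta$ being origin-preserving with $z$-independent Jacobian determinant; once this is in hand there is no real obstacle. The only issues requiring (routine) care are that $h$ is genuinely holomorphic with a convergent power series near $0$, which follows from the holomorphic dependence of the Bergman kernel on its first argument, and the termwise comparison of the series with $\rho_\theta$. As a cross-check I would note an equivalent route: decompose $A^2(D)$ into the $\rho_\theta$-weight spaces $H_d=\{g:g\circ\rho_\theta=e^{id\theta}g\}$ via the averaging projections $g\mapsto \frac{1}{2\pi}\int_0^{2\pi}e^{-id\theta}g(\rho_\theta z)\,d\theta$; these are mutually orthogonal, $H_0$ consists only of constants for the same weighted-degree reason, and choosing an adapted orthonormal basis shows that only the normalized constant contributes to $K_D(\cdot,0)$, giving $K_D(z,0)=1/\mathrm{Vol}(D)$ once more.
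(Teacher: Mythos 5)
Your proposal is correct and follows essentially the same route as the paper: apply the transformation formula \eqref{trans} to the rotation $f_\theta$ with $w=0$ (where the two Jacobian-determinant factors cancel), compare Taylor coefficients to see that the weighted degree $m_1\alpha_1+m_2\alpha_2$ vanishes only at $\alpha=(0,0)$, and invoke Maschler's criterion with $c=1/\mathrm{Vol}(D)\neq 0$. The weight-space decomposition you mention as a cross-check is a nice alternative, but your main argument coincides with the paper's proof.
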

\begin{proof}
Define $f_{\theta}: D\rightarrow D$ by $f_{\theta}(z_1, z_2)=(e^{i m_1\theta}z_1, e^{i m_2\theta} z_2)$
for $z\in D$ and $\theta \in \mathbb R$.
Then the Jacobian matrix $J(f_\theta,z)$ is given by $\mathrm{diag}(e^{im_1 \theta},e^{im_2 \theta} )$.
Since $D$ is quasi-circular, $f_{\theta}$ is an automorphism of $D$. By the relative invariance of the Bergman kernel \eqref{trans}
we obtain
\begin{align}\label{eq:Bergman}
K_{D}(z,0)=K_D(f_\theta(z),0),
\end{align}
for any $\theta\in \mathbb R$.
Using \eqref{eq:Bergman} and the Taylor expansion, we have
\begin{align*}
K_D(z,0)&=\sum_{k\in\mathbb Z^2_{\geq 0}} a_k z_1^{k_1} z_2^{k_2},\\
&=\sum_{k\in\mathbb Z^2_{\geq 0}} e^{i(\sum_{j=1}^2 m_j k_j)\theta } a_k   z_1^{k_1} z_2^{k_2}=K_D(f_\theta(z),0).
\end{align*}
It follows that $a_k= e^{i(\sum_{j=1}^2 m_j k_j)\theta } a_k$ for any $\theta \in \mathbb R$ and $k \in \mathbb Z_{\geq 0}^2$.
Since $\sum_{j=1}^2 m_j k_j \not =0$ except $(k_1,k_2)=(0,0)$, all coefficients except the constant term are zero.
Thus $K_{D}(z,0)$ is a constant.
Since $D$ is bounded, we know that $K(0,0)$ is non-zero constant.
Therefore, we finally conclude that $K_{D}(z,0)$ is a non-zero constant.
\end{proof}
For the minimality of quasi-circular domains, we do not need to assume the normality for them.
On the other hand, we need the normality to prove that a quasi-circular domain is representative.
\begin{proposition}\label{repre}
Let $D \subset \mathbb C^2$ be a normal quasi-circular domain.
Then $D$ is a representative domain with the center at the origin.
\end{proposition}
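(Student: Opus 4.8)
The plan is to show that $T_D(z,0)$ is a constant matrix, which is exactly the defining condition for $D$ to be representative with center at the origin. The strategy mirrors the proof of Proposition \ref{minimal}: I would exploit the invariance of the Bergman kernel under the quasi-circular rotations $f_\theta(z_1,z_2)=(e^{im_1\theta}z_1,e^{im_2\theta}z_2)$ together with a Taylor-expansion argument, but now applied to the mixed second logarithmic derivatives that make up the entries of $T_D$. The key new ingredient, and the place where the normality hypothesis $m_1\geq 2$ will be essential, is that after differentiating we will encounter Fourier constraints of the form $a_k = e^{i(\sum_j m_j k_j)\theta}a_k$ with shifted exponents, and only normality forces all the offending coefficients to vanish.

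First I would apply the transformation formula \eqref{eq:T} to the automorphism $f_\theta$. Since $J(f_\theta,z)=\mathrm{diag}(e^{im_1\theta},e^{im_2\theta})$ is diagonal and independent of $z$, and since by Proposition \ref{minimal} we know $K_D(z,0)$ is a nonzero constant (so in particular $K_D(z,0)\neq 0$ and $T_D(z,0)$ is well-defined near the relevant points), formula \eqref{eq:T} at $w=0$ reads
\begin{align*}
T_D(z,0)=\overline{{}^tJ(f_\theta,0)}\,T_D(f_\theta(z),0)\,J(f_\theta,z),
\end{align*}
which gives the entrywise relations
\begin{align*}
(T_D)_{jl}(z,0)=e^{-im_j\theta}\,e^{im_l\theta}\,(T_D)_{jl}(f_\theta(z),0),\qquad j,l\in\{1,2\}.
\end{align*}
Next I would expand each entry $(T_D)_{jl}(z,0)$, which is holomorphic in $z$, as a power series $\sum_{k}c^{(jl)}_k z_1^{k_1}z_2^{k_2}$ and compare coefficients, obtaining $c^{(jl)}_k=e^{i(m_lk_1\text{-type shift})\theta}c^{(jl)}_k$; concretely the phase attached to the coefficient $c^{(jl)}_k$ is $\exp\{i(m_1k_1+m_2k_2+m_l-m_j)\theta\}$. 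Hence $c^{(jl)}_k=0$ unless $m_1k_1+m_2k_2+m_l-m_j=0$.

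The crux is then a purely arithmetic analysis of when $m_1k_1+m_2k_2 = m_j-m_l$ can hold for $(k_1,k_2)\in\mathbb Z_{\geq 0}^2$. For the diagonal entries ($j=l$) the equation is $m_1k_1+m_2k_2=0$, forcing $k=(0,0)$, so each diagonal entry is constant — this needs no normality. For the off-diagonal entries the equation becomes $m_1k_1+m_2k_2 = \pm(m_2-m_1)$; here I would use $m_1\geq 2$, $m_1<m_2$, and $\gcd(m_1,m_2)=1$ to argue that no nonnegative solution exists, whence the off-diagonal entries vanish identically and are trivially constant. I expect this arithmetic step to be the main obstacle: one must verify that $m_2-m_1$ cannot be written as a nonnegative combination $m_1k_1+m_2k_2$, and this is precisely where normality ($m_1\geq 2$) is indispensable — in the non-normal case $m_1=1$ the value $m_2-m_1=m_2-1$ can be realized (e.g.\ with $k=(m_2-1,0)$), producing a genuinely nonconstant off-diagonal entry and breaking the representativeness. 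Once all entries of $T_D(z,0)$ are shown to be constant, $T_D(z,0)$ is a constant matrix and the proposition follows.
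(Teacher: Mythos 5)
Your proposal is correct and follows essentially the same route as the paper: apply the transformation formula \eqref{eq:T} to the rotations $f_\theta$, expand each entry of $T_D(z,0)$ in a Taylor series, and use the resulting Fourier constraint $m_1k_1+m_2k_2+m_l-m_j=0$, with normality ($m_1\geq 2$ together with $\gcd(m_1,m_2)=1$) ruling out the only dangerous case $m_1(k_1+1)=m_2$ in the $(j,l)=(2,1)$ entry. Your observation that both off-diagonal entries in fact vanish identically (not merely stay constant) matches the remark the paper makes immediately after its proof.
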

\begin{proof}
In the following, for simplicity, we use the notation 
$$K_{\overline{i}j}(z,w)= \frac{\partial^2}{\partial{\overline{w}_i}\partial{z_j}  }\log K_D(z,w).$$
By the transformation formula \eqref{eq:T}, we have
\begin{align}\label{TD}
\begin{pmatrix}
K_{\overline{1}1}(z,0) & K_{\overline{1}2}(z,0)\\
K_{\overline{2}1}(z,0) & K_{\overline{2}2}(z,0)
\end{pmatrix}
=
\begin{pmatrix}
K_{\overline{1}1}(f_\theta(z),0) & e^{i(m_2-m_1)\theta}  K_{\overline{1}2}(f_\theta(z),0)\\
e^{i(m_1-m_2)\theta} K_{\overline{2}1}(f_\theta(z),0) & K_{\overline{2}2}(f_\theta(z),0)
\end{pmatrix}.
\end{align}
for any $\theta\in \mathbb R$.
By a similar argument used in Proposition \ref{minimal}, we know that $K_{\overline{k}k}(z,0)$ is a constant
for $k=1,2$. 
Let us prove that $K_{\overline{1}2}(z,0)$ is a constant.
By \eqref{TD} and the Taylor expansion, we see that
\begin{align*}
K_{\overline{1}2}(z,0)&= \sum_{k\in\mathbb Z^2_{\geq 0}} a_k z_1^{k_1} z_2^{k_2},\\
&=e^{i(m_2-m_1)\theta}  \sum_{k\in\mathbb Z^2_{\geq 0}} e^{i(\sum_{j=1}^2 m_j k_j)\theta }   a_k z_1^{k_1} z_2^{k_2},\\
&= e^{i(m_2-m_1)\theta}  K_{\overline{1}2}(f_\theta(z),0).
\end{align*}
Then we obtain $a_k=e^{i(m_2-m_1 + \sum_{j=1}^2 m_j k_j  )\theta } a_k$.
Since $m_2-m_1>0$,
$c_{k,m}=m_2-m_1 + \sum_{j=1}^2 m_j k_j$ is non-zero for any $k_1,k_2 \geq 0$.
Thus we know that $a_k=0$ for any $k_1,k_2 \geq 0$ and $K_{\overline{1}2}(z,0)\equiv 0$. 
Next we consider $K_{\overline{2}1}(z,0)$. A similar argument shows that 
\begin{align*}
K_{\overline{2}1}(z,0)&= \sum_{k\in\mathbb Z^2_{\geq 0}} a'_k z_1^{k_1} z_2^{k_2},\\
&=e^{i(m_1-m_2)\theta}  \sum_{k\in\mathbb Z^2_{\geq 0}} e^{i(\sum_{j=1}^2 m_j k_j)\theta }   a'_k z_1^{k_1} z_2^{k_2},\\
&= e^{i(m_1-m_2)\theta}  K_{\overline{2}1}(f_\theta(z),0).
\end{align*}
It follows that $a'_k=e^{i(m_1-m_2 + \sum_{j=1}^2 m_j k_j  )\theta } a'_k$.
For any $k_1\geq 0, k_2\geq 1$, the number 
$c'_{k,m}=m_1-m_2 + \sum_{j=1}^2 m_j k_j$ is a non-zero constant.
It follows that $a'_k=0$ for $k_1\geq 0, k_2 \geq 1$.

There remains the task of proving that $c'_{k,m}\not =0$ for $k_1\geq 1$ and $k_2=0$.
In this case, $c'_{k,m}=(k_1+1)m_1-m_2$.
If $c'_{k,m}=0$, then $(k_1+1)m_1=m_2$.
On the other hand, by the condition $\mathrm{gcd}(m_1,m_2)=1$, we have $n m_1\not = m_2$ for any $n \geq 2$.
It is contradiction. Thus we conclude that $a'_k=0$ except for the case $(k_1 ,k_2)=(0,0) $. It follows that $K_{\overline{2}1}(z,0)$ is a constant.
Hence $T_D(z,0)$ is a constant matrix.
\end{proof}
We note that the constant term of $K_{\overline{2}1}(z,0)$ also vanishes. In other words, $T_D(z,0)$ is a diagonal matrix.

Now we introduce the Bergman mapping and recall its relevant properties.
\begin{definition}
Let $D$ be a bounded domain.
Set $U_p^D:=\{ z\in D; K_D(z,p)\not=0 \}$. Define a mapping $\sigma_p^D:U_p^D \rightarrow \mathbb C^n$ by
\begin{align}
\sigma_{p}^D (z):=\left. T_D(p,p)^{-1/2}\mathrm{grad}_{\overline{w}}\log \dfrac{K_D(z,w)}{K_D(p,w) } \right|_{w=p} ,\quad \mbox{for } z\in U_p^D.
\end{align}
The mapping $\sigma_p^D$ is called the Bergman mapping defined at $p$.
\end{definition}
Here we set
$$\mathrm{grad}_{\overline{w}}  f(w) := {}^{t} \left( \dfrac{\partial f}{\partial \overline{w_1}} (w), \ldots,
\dfrac{\partial f}{\partial \overline{w_n}}(w)  \right),$$
for anti-holomorphic functions $f$ on $D$.

By the definition of the Bergman mapping, we easily verify the following properties (see also \cite{Ishi}):
\begin{align}
 \sigma_{p}^D(p)&=0, \label{property1}\\
 J(\sigma_p^D,z)&=T_D(p,p)^{-1/2} T_D(z,p) \quad \mbox{for } z\in U_p^D.\label{property2}
\end{align}
Let $D\rightarrow D'$ be a biholomorphism. Define an $n\times n$ matrix $L(\varphi, p)$ by
$$L(\varphi, p):= T_{D'}(\varphi(p),\varphi(p) )^{-1/2} \overline{{}^t J(\varphi, p) ^{-1}} T_D(p,p)^{1/2}.$$
The matrix $L(\varphi, p)$ is a unitary matrix. Indeed, by \eqref{eq:T}, we have
\begin{align*}
L(\varphi, p)^* L(\varphi, p)&=
T_{D}(p,p )^{-1/2} ( J(\varphi, p) ^{-1}  T_{D'}(\varphi(p),\varphi(p) )^{-1} \overline{{}^t J(\varphi, p) ^{-1}} )T_D(p,p)^{1/2},\\
&=T_{D}(p,p )^{-1/2} T_D(p,p ) T_{D}(p,p )^{1/2},\\
&=I.
\end{align*}
The following relation between the Bergman mapping $\sigma^D_p$ and the unitary matrix $L(\varphi,p)$ is substantial for our purpose (see \cite{Ishi}).
\begin{proposition}\label{diagram}
Let $D, D'$ be bounded domains and $\varphi: D\rightarrow D'$ a biholomorphism.
One has $\sigma_{\varphi(p)}^D \circ \varphi= L(\varphi,p) \circ \sigma_p^D$.
In other words, the diagram
\[\xymatrix{
U_p^D \ar[d]_{\sigma_{p}^D} \ar[r]^\varphi_\sim \ar@{}[dr]|\circlearrowright & U_{\varphi(p)}^{D'} \ar[d]^{\sigma_{\varphi(p)}^{D'}} \\
\mathbb C^n \ar[r]^{L(\varphi,p)} & \mathbb C^n \\
}
\]
is commutative.
\end{proposition}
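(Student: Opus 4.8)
The plan is to show that the two holomorphic mappings $F := \sigma_{\varphi(p)}^{D'}\circ \varphi$ and $G := L(\varphi,p)\circ \sigma_p^D$, both defined on $U_p^D$, coincide. First I would check that the composition $F$ even makes sense: the transformation formula \eqref{trans} expresses $K_D(z,p)$ as a nonzero-determinant multiple of $K_{D'}(\varphi(z),\varphi(p))$, so $K_D(z,p)\neq 0$ if and only if $K_{D'}(\varphi(z),\varphi(p))\neq 0$. Hence $\varphi$ carries $U_p^D$ into $U_{\varphi(p)}^{D'}$ and $F$ is a well-defined holomorphic map. The whole proposition is then a matching of an initial value and a derivative, the substantive content having been front-loaded into the properties \eqref{property1} and \eqref{property2}.

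At the base point, \eqref{property1} gives $\sigma_p^D(p)=0$ and $\sigma_{\varphi(p)}^{D'}(\varphi(p))=0$, so $F(p)=0=G(p)$, the latter because $L(\varphi,p)$ is linear and annihilates $0$. For the differentials, the chain rule together with \eqref{property2} yields
\begin{align*}
J(F,z) &= J(\sigma_{\varphi(p)}^{D'},\varphi(z))\,J(\varphi,z) = T_{D'}(\varphi(p),\varphi(p))^{-1/2}\,T_{D'}(\varphi(z),\varphi(p))\,J(\varphi,z),\\
J(G,z) &= L(\varphi,p)\,J(\sigma_p^D,z) = T_{D'}(\varphi(p),\varphi(p))^{-1/2}\,\overline{{}^t J(\varphi,p)^{-1}}\,T_D(z,p),
\end{align*}
where in the second line the factor $T_D(p,p)^{1/2}$ inside $L(\varphi,p)$ cancels the $T_D(p,p)^{-1/2}$ coming from $J(\sigma_p^D,z)$. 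Thus $J(F,z)=J(G,z)$ reduces to the identity $\overline{{}^t J(\varphi,p)^{-1}}\,T_D(z,p) = T_{D'}(\varphi(z),\varphi(p))\,J(\varphi,z)$, which is precisely the transformation formula \eqref{eq:T} with $w=p$ after left-multiplying by $\overline{{}^t J(\varphi,p)^{-1}}$.

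Having matched values at $p$ and differentials everywhere, I would conclude that $F-G$ is a holomorphic map with vanishing Jacobian, hence locally constant, and equal to $0$ at $p$. The one point demanding care is that $U_p^D$ need not be connected (the zero set of $K_D(\cdot,p)$ may disconnect it), so this identity-principle argument immediately gives $F=G$ only on the connected component containing $p$. To obtain the conclusion on all of $U_p^D$ at once, and to sidestep connectivity entirely, I would instead run the computation directly: by \eqref{trans} one has $\log\frac{K_D(z,w)}{K_D(p,w)} = \log\frac{K_{D'}(\varphi(z),\varphi(w))}{K_{D'}(\varphi(p),\varphi(w))} + \log\frac{\det J(\varphi,z)}{\det J(\varphi,p)}$, the last term being independent of $w$ and hence killed by $\mathrm{grad}_{\overline{w}}$; applying $\mathrm{grad}_{\overline{w}}\big|_{w=p}$ and the anti-holomorphic chain rule to the first term produces the factor $\overline{{}^t J(\varphi,p)}$ and, after multiplication by $T_D(p,p)^{-1/2}$, rearranges into $\sigma_{\varphi(p)}^{D'}(\varphi(z)) = L(\varphi,p)\,\sigma_p^D(z)$ pointwise. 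I expect the main (though modest) obstacle to be the bookkeeping in this conjugate chain rule, ensuring the Jacobian factor emerges as $\overline{{}^t J(\varphi,p)}$ and not as its inverse or untransposed form.
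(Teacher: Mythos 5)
Your proposal is correct, and it is worth noting that the paper itself offers no proof of this proposition at all: it simply states it with a pointer to Ishi--Kai \cite{Ishi}. So you have supplied the argument the paper omits, and you supplied the right one. Your final, direct computation is exactly the standard proof: divide the transformation formula \eqref{trans} at $(z,w)$ by the same formula at $(p,w)$ so that the factor $\overline{\det J(\varphi,w)}$ cancels, observe that the leftover term $\log\bigl(\det J(\varphi,z)/\det J(\varphi,p)\bigr)$ is independent of $w$ and dies under $\mathrm{grad}_{\overline w}$, and then apply the conjugate chain rule, which produces $\mathrm{grad}_{\overline w}\bigl|_{w=p} = \overline{{}^t J(\varphi,p)}\,\mathrm{grad}_{\overline u}\bigl|_{u=\varphi(p)}$; solving for $\sigma^{D'}_{\varphi(p)}(\varphi(z))$ turns $\bigl(\overline{{}^t J(\varphi,p)}\bigr)^{-1}=\overline{{}^t J(\varphi,p)^{-1}}$ into precisely the middle factor of $L(\varphi,p)$. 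Two further points in your write-up deserve credit: the verification via \eqref{trans} that $\varphi(U_p^D)=U_{\varphi(p)}^{D'}$, so the composition is well defined (the paper never addresses this), and your recognition that the seemingly cleaner first argument --- matching the value at $p$ and the Jacobians via \eqref{property2} and \eqref{eq:T} --- only yields equality on the connected component of $p$ in $U_p^D$, since the zero set of $K_D(\cdot,p)$ can disconnect $U_p^D$. Abandoning that route in favor of the pointwise identity was the right call; had you stopped at the Jacobian-matching argument, the proof would have had a genuine gap.
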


In the following, we consider minimal representative domains.
In \cite[Corollary 2]{Lu}, Q.-K.~Lu proved that if $D$ and $D'$ are both representative domains in $ \mathbb C^n$ then any biholomorphism which
maps the center of $D$ to that of $D'$ is an affine transformation.
If two domains $D,D'$ are minimal representative domains with the center at the origin, then we obtain the following:
\begin{proposition}\label{comm}
Assume that $D$ and $D'$ are minimal representative domains with the center at the origin in $ \mathbb C^n$.
Then any biholomorphism which maps the center of $D$ to that of $D'$ is linear.
\end{proposition}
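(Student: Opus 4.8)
The plan is to exploit the commutative diagram of Proposition \ref{diagram} together with the observation that, for a minimal representative domain, the Bergman mapping at the center is itself \emph{linear}; once this is established, linearity of $\varphi$ falls out of the diagram immediately. Throughout I set $p=0$, the common center, so that $\varphi(0)=0$.

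First I would check that the relevant Bergman mappings are globally defined. Since $D$ and $D'$ are minimal domains with center at the origin, we have $K_D(z,0)\equiv 1/\mathrm{Vol}(D)\neq 0$ for all $z\in D$ (and likewise for $D'$). Hence $U_0^D=D$ and $U_0^{D'}=D'$, so $\sigma_0^D$ and $\sigma_0^{D'}$ are defined on all of $D$ and $D'$, and the diagram of Proposition \ref{diagram} applies on the whole domain.

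Next I would show that $\sigma_0^D$ is linear. Because $D$ is a representative domain with center at the origin, $T_D(z,0)$ is constant in $z$; evaluating at $z=0$ identifies this constant as $T_D(0,0)$. Substituting into the Jacobian formula \eqref{property2} gives $J(\sigma_0^D,z)=T_D(0,0)^{-1/2}T_D(0,0)=T_D(0,0)^{1/2}$ for every $z\in D$. A holomorphic map on a connected domain whose Jacobian is everywhere equal to a fixed matrix is affine, and since $\sigma_0^D(0)=0$ by \eqref{property1}, it is in fact linear: $\sigma_0^D(z)=T_D(0,0)^{1/2}z$. The same reasoning yields $\sigma_0^{D'}(w)=T_{D'}(0,0)^{1/2}w$.

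Finally I would feed these explicit forms into Proposition \ref{diagram}. With $\varphi(0)=0$ the diagram reads $\sigma_0^{D'}\circ\varphi=L(\varphi,0)\circ\sigma_0^D$, that is, $T_{D'}(0,0)^{1/2}\varphi(z)=L(\varphi,0)\,T_D(0,0)^{1/2}z$. Since $T_{D'}(0,0)$ is positive definite and hence invertible, solving for $\varphi$ gives
$$\varphi(z)=T_{D'}(0,0)^{-1/2}\,L(\varphi,0)\,T_D(0,0)^{1/2}\,z,$$
which is manifestly linear. I do not expect a serious obstacle here: the proof is essentially a matter of assembling the stated properties in the correct order. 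The one point deserving care is the passage from a constant Jacobian to an affine (hence, after using $\sigma_0^D(0)=0$, linear) map, where I would invoke connectedness of $D$ and integrate the differential; the use of minimality to guarantee that the Bergman mapping is globally defined is the other ingredient that must be stated explicitly so that the diagram is valid on all of $D$.
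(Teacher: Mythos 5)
Your proposal is correct and follows essentially the same route as the paper's proof: minimality to ensure $U_0^D=D$ and $U_0^{D'}=D'$, representativity plus \eqref{property2} and \eqref{property1} to show $\sigma_0^D(z)=T_D(0,0)^{1/2}z$ (and likewise for $D'$), and then Proposition \ref{diagram} to solve for the biholomorphism and conclude linearity. No gaps; the two arguments match step for step.
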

\begin{proof}
Since two domains $D$ and $D'$ are both minimal, we know that $K_{D}(z,0)\not =0$ and $K_{D'}(z',0)\not =0$
for all $z\in D$ and $z' \in D'$. It follows that $U_{0}^{D}=D$ and $U_{0}^{D'}=D'$.
Next we prove that the Bergman mapping $\sigma_{0}^D$ is linear.
Since $D$ is representative, we know that $T_D(z,0)\equiv T_D(0,0)$.
This, together with \eqref{property2}, implies that $J(\sigma_{0}^D, z )=T_D(0,0)^{1/2}$.
Thus the Jacobian matrix $J(\sigma_{0}^D, z)$ is a constant matrix. It follows that $\sigma_{0}^D$
is an affine transformation. By \eqref{property1}, we have $\sigma_{0}^D(z)=T_D(0,0)^{1/2}z$. By the same argument, $\sigma_{0}^{D'}$ is also linear.

Let $f: D \rightarrow D' $ be a biholomorphism which maps the center of $D$ to that of $D'$.
The above argument and Proposition \ref{diagram} give us the following commutative diagram:
\[\xymatrix{
D \ar[d]_{T_{D}(0,0)^{\frac{1}{2}} =\sigma_{0}^D} \ar[r]^f_\sim \ar@{}[dr]|\circlearrowright & D' \ar[d]^{ \sigma_{0}^{D'}= T_{D'}(0,0)^{\frac{1}{2}} } \\
\mathbb C^n \ar[r]^{L(f,0)} & \mathbb C^n. \\
}
\]
Therefore we conclude that $f(z)=T_{D'}(0,0)^{-\frac{1}{2}} L(f, 0) T_{D}(0,0)^{\frac{1}{2}}z$.
It is obviously linear.
\end{proof}
This proposition is a natural generalization of the argument given in \cite[Section 2.2]{Ishi}
to the minimal representative domains.
Now we are ready to prove our main result.
\begin{theorem}\label{main}
Let $D, D'\subset \mathbb C^2$ be normal quasi-circular domains and
$f:D\rightarrow D'$ an origin-preserving biholomorphism.
Then the mapping $f$ is given by $f=T_{D'}(0,0)^{-\frac{1}{2}} L(f, 0) T_{D}(0,0)^{\frac{1}{2}}$.
In particular, the mapping $f$ is a linear mapping.
\end{theorem}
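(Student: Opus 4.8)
The plan is to recognize that Theorem \ref{main} is essentially a synthesis of the structural results already established, namely Propositions \ref{minimal}, \ref{repre}, and \ref{comm}. The strategy is simply to verify that both $D$ and $D'$ satisfy the hypotheses of Proposition \ref{comm}, after which the explicit formula and linearity follow immediately.

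First I would invoke Proposition \ref{minimal}: since $D$ and $D'$ are quasi-circular domains in $\mathbb{C}^2$, each is a minimal domain with center at the origin. It is worth emphasizing that this step does not require normality. Second, since $D$ and $D'$ are moreover \emph{normal} quasi-circular, Proposition \ref{repre} tells us that each is a representative domain with center at the origin. Combining these two observations, both $D$ and $D'$ are minimal representative domains with center at the origin in $\mathbb{C}^2$.

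With the hypotheses of Proposition \ref{comm} thus verified, I would apply it directly to the origin-preserving biholomorphism $f: D \to D'$. Since $f$ carries the center of $D$ (the origin) to the center of $D'$ (the origin), Proposition \ref{comm} yields at once both the explicit form
$$f(z) = T_{D'}(0,0)^{-\frac{1}{2}} L(f,0) T_{D}(0,0)^{\frac{1}{2}} z$$
and the conclusion that $f$ is linear.

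The essential analytic content, namely establishing minimality and representativeness, has already been dispatched in the preceding propositions; in particular the number-theoretic condition $\mathrm{gcd}(m_1,m_2)=1$ together with normality $m_1 \geq 2$ is exactly what forces the off-diagonal entry $K_{\overline{2}1}(z,0)$ to be constant. Consequently I do not expect any genuine obstacle to remain at this stage. The only point that deserves emphasis is that normality is precisely the hypothesis needed to upgrade a merely minimal quasi-circular domain to a representative one, as the non-normal $(1,2)$-circular domain $D_2$ illustrates: there the condition $(k_1+1)m_1 = m_2$ can be solved, so the argument of Proposition \ref{repre} breaks down and the Bergman mapping need not be linear.
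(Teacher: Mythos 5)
Your proposal is correct and follows exactly the paper's own argument: combine Propositions \ref{minimal} and \ref{repre} to see that $D$ and $D'$ are minimal representative domains with center at the origin, then apply Proposition \ref{comm} to the origin-preserving biholomorphism $f$ to obtain both the explicit formula and linearity. Your additional remarks on where normality enters are accurate but not needed for the deduction itself.
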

\begin{proof}
Since the second part of the theorem follows from the first part, we prove only the first part of the theorem.
By Propositions \ref{minimal} and \ref{repre}, we know that $D, D'$ are minimal representative domains with the center at the origin.
Then we obtain the following commutative diagram by Proposition \ref{comm}:
\[\xymatrix{
D \ar[d]_{T_{D}(0,0)^{\frac{1}{2} }= \sigma_0^D } \ar[r]^f_\sim \ar@{}[dr]|\circlearrowright & D' \ar[d]^{\sigma_0^{D'}=T_{D'}(0,0)^{\frac{1}{2}} } \\
\mathbb C^2 \ar[r]^{L(f,0)} & \mathbb C^2. \\
}
\]
Hence we conclude that $f(z)=T_{D'}(0,0)^{-\frac{1}{2}} L(f, 0) T_{D}(0,0)^{\frac{1}{2}}z$.
\end{proof}
As a special case of this theorem we obtain the following.
\begin{theorem}\label{main0}
Let $D$ be a normal quasi-circular domain in $\mathbb C^2$ and $f$ an origin-preserving automorphism of $D$.
Then we have $f=T_{D}(0,0)^{-\frac{1}{2}} L(f, 0) T_{D}(0,0)^{\frac{1}{2}}$.
In particular, the mapping $f$ is a linear mapping.
\end{theorem}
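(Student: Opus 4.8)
The plan is to recognize Theorem \ref{main0} as the diagonal case $D'=D$ of Theorem \ref{main}. An origin-preserving automorphism $f\in\mathrm{Aut}(D)$ is, by definition, a biholomorphism $f:D\rightarrow D$ that fixes the origin, so it satisfies the hypotheses of Theorem \ref{main} verbatim once one takes $D'$ to be $D$ itself. Thus the whole statement should follow by feeding $f$ into the machinery already assembled, with no new analytic input.

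Concretely, first I would confirm that the two structural hypotheses needed upstream are in force for our single domain $D$. By Proposition \ref{minimal}, the quasi-circular domain $D$ is minimal with center at the origin (normality is not even needed here). By Proposition \ref{repre}, using the normality assumption $m_1\geq 2$ together with $\mathrm{gcd}(m_1,m_2)=1$ to rule out the resonance $(k_1+1)m_1=m_2$, the domain $D$ is representative with the same center. Hence $D$ is a minimal representative domain centered at the origin, and in particular $K_D(z,0)\neq 0$ for all $z\in D$, so $U_0^D=D$ and the Bergman mapping $\sigma_0^D$ is defined on all of $D$.

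Next I would invoke Proposition \ref{comm} with both domains equal to $D$ and with the biholomorphism taken to be $f$. Since $D$ is representative, $T_D(z,0)\equiv T_D(0,0)$, so by \eqref{property2} the Jacobian $J(\sigma_0^D,z)$ is the constant matrix $T_D(0,0)^{1/2}$; combined with $\sigma_0^D(0)=0$ from \eqref{property1}, this forces $\sigma_0^D(z)=T_D(0,0)^{1/2}z$. The commutative diagram of Proposition \ref{diagram}, with both vertical maps collapsing to this constant linear map, then yields $f=T_D(0,0)^{-1/2}L(f,0)\,T_D(0,0)^{1/2}$. Because $T_D(0,0)^{\pm 1/2}$ and the unitary matrix $L(f,0)$ are all constant, the composite is a fixed matrix acting on $z$, so $f$ is linear, giving the ``in particular'' clause.

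I do not expect any genuine obstacle, since every analytic difficulty has already been absorbed into Propositions \ref{minimal}, \ref{repre}, and \ref{comm}. The only point deserving an explicit remark is the purely formal one that specializing Theorem \ref{main} (equivalently Proposition \ref{comm}) to $D'=D$ is legitimate: an automorphism fixing the origin is exactly an instance of the origin-preserving biholomorphisms between minimal representative domains treated there, so no separate verification beyond this identification is required.
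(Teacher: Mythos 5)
Your proposal is correct and matches the paper exactly: the paper states Theorem \ref{main0} as an immediate special case of Theorem \ref{main} (taking $D'=D$), with no separate proof, and your specialization of the formula $f=T_{D'}(0,0)^{-\frac{1}{2}} L(f,0) T_{D}(0,0)^{\frac{1}{2}}$ to the diagonal case is precisely that argument. Your additional re-verification via Propositions \ref{minimal}, \ref{repre}, \ref{comm}, and \ref{diagram} simply retraces the proof of Theorem \ref{main} and introduces nothing different.
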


We should note that there is a concrete example of $(1,2)$-circular domain
whose automorphism group contains an origin-preserving automorphism which is not linear.
In \cite{Zapa}, Zapa{\l}owski studied proper holomorphic self-mappings for the symmetrized $(p,n)$-ellipsoid $\mathbb E_{p,n}:=\pi_n (\mathbb B_{p,n} )$ where $\pi_n=(\pi_{n,1}, \ldots, \pi_{n,n} )$ and $\mathbb B_{p,n}$ are defined by
\begin{align*}
\mathbb B_{p,n}&:= \left\lbrace z\in \mathbb C^n: \sum_{j=1}^n |z_j|^{2p}<1 \right\rbrace,\\
\pi_{n,k}(z)&:= \sum_{1\leq j_1 < \cdots <j_k\leq n} z_{j_1} \cdots z_{j_k}, \quad 1\leq k \leq n,\quad z=(z_1,\ldots, z_n)\in \mathbb C^n.
\end{align*}
The symmetrized $(p,n)$-ellipsoid $\mathbb E_{p,n}$ is a $(1,2,\ldots, n)$-circular domain.
In the same paper, Zapa{\l}owski also determined the automorphism group of $\mathbb E_{p,n}$.
For instance, if $(p,n)=(1/2,2)$ then the automorphism group of $\mathbb E_{1/2,2}$ contains the following mapping:
\begin{align*}
\varphi(z_1, z_2)=\left(\zeta z_1 , \zeta^2 \left(\frac{z_1^2}{4} -z_2 \right)\right), \quad (z_1, z_2) \in \mathbb E_{1/2,2},
\end{align*}
where $\zeta \in \mathbb T=\{z\in\mathbb C; |z|=1\}$. Obviously, $\varphi$ is a non-linear mapping which preserves the origin.
Thus we cannot drop the condition ``$m_1 \geq 2$" in the definition of the normality.

We conclude this paper with some remarks.
\begin{remark}
We showed that every normal quasi-circular domain is a representative domain which is also minimal with the same center.
One may expect that these two kinds of centers coincide with each other whenever a domain is minimal and representative.
However this expectation is not true in general.
Actually, Maschler proved that there are representative domains which are not minimal domains with the same center \cite[Corollary 2]{Maschler}.
\end{remark}
\begin{remark}
In \cite{Ishi}, Ishi and Kai defined the representative domain as the image of the Bergman mapping.
In their definition, the representative domain $D$ always satisfies $T_D(z,0) \equiv I$. Namely it is a representative domain with the center at the origin in the sense of Q.-K. Lu.
As we have seen in this section, for the linearity of the Bergman mapping, the condition $T_D(z,0)\equiv T_D(0,0)$ is essential.
This is why we use Q.-K. Lu's definition in this paper.
\end{remark}
\begin{remark}\label{rem}
Let $D$ be a circular domain in $\mathbb C^2$ and put $\rho_\theta(z_1,z_2)=(e^{i\theta} z_1,e^{i\theta} z_2 )$.
Since every circular domain is a minimal representative domain with the center at the origin, we can prove Cartan's theorem by using the Bergman mapping \cite{Ishi}.
For circular cases, there is another way to prove Cartan's theorem.
Namely, for circular cases, we have a relation $\varphi \circ \rho_\theta = \rho_\theta \circ \varphi$ for any $\varphi \in \mbox{Aut}(D)$ such that $\varphi(0)=0$. By using this relation, we can conclude that $f$ must be linear (cf. \cite[Chapter 6]{Gong} or \cite[Chapter 5]{Nara}).
On the contrary to the circular cases, we cannot obtain such a relation for $f_\theta(z_1,z_2)=(e^{im_1\theta} z_1,e^{im_2\theta} z_2 )$ by using Cartan uniqueness theorem.
Put $g=f_{-\theta}  \circ \varphi^{-1}  \circ f_\theta \circ \varphi$. In this case, the matrix $J(f_{\theta},z)$ is an element of the center of
$\mbox{Mat$_{2\times 2}(\mathbb C)$}$ if and only if $m_1=m_2$.
In other words, if $m_1\not=m_2$, then the matrix $J(f_{\theta},z)$ does not commute with all elements of $\mbox{Mat$_{2\times 2}(\mathbb C)$}$.
Thus we cannot conclude that $J(g,z)=I$ in the same way as for the circular domains. 
\end{remark}
\section*{Acknowledgement}
The author would like to thank Professor Kang-Tae Kim for helpful discussion.
The author also thanks to Hyeseon Kim and Van Thu Ninh for their comments on
an earlier draft of the manuscript.
Moreover the author is indebted to Pawe{\l} Zapa{\l}owski, who pointed out an error in an earlier version of the manuscript.
\bibliographystyle{amsplain}

\end{document}